\documentclass[10pt]{amsart}
\usepackage{geometry} 
\usepackage{graphicx}
\geometry{a4paper} 
\usepackage{amsmath}
\usepackage{amsthm}
\usepackage{amssymb}
\usepackage{color}
\usepackage{amscd}
\usepackage{amsfonts}
\usepackage{graphicx}
\usepackage{epsfig}
\usepackage{amssymb,latexsym}

\newtheorem{theorem}{Theorem}[section]

\newtheorem{definition}{Definition}[section]
\newtheorem{lemma}[theorem]{Lemma} 
\newtheorem{proposition}[theorem]{Proposition}

\makeatletter
\numberwithin{equation}{section}
\makeatother

\def\bx{\mathbf{x}}
\def\bX{\mathbf{X}}
\def\by{\mathbf{y}}

\def\bz{\mathbf{z}}

\def\bbg{\mathbf{g}}
\def\bbh{\mathbf{h}}
\def\bG{\mathbf{G}}

\def\R{\mathbb{R}}
\def\bn{\mathbf{n}}

\def\bcero{\mathbf{0}}
\def\buno{\mathbf{1}}

\title[Constrained optimization]{Constrained optimization through fixed point techniques}
\author{Pablo Pedregal}
\thanks{
Universidad de Castilla La Mancha, INEI, Ciudad Real (Spain). Research supported by
MTM2013-47053-P of the Mineco (Spain).  
 e-mail: {\tt pablo.pedregal@uclm.es
}}

\begin{document}

\maketitle
    \begin{abstract}
We introduce an alternative approach for constrained mathematical programming problems. It rests on two main  aspects: an efficient way to compute optimal solutions for unconstrained problems, and multipliers regarded as variables for a certain map. Contrary to typical dual strategies, optimal vectors of multipliers are sought as fixed points for that map. 
Two distinctive features of the procedure are worth highlighting: its simplicity and flexibility for the implementation, and its convergence properties.
    \end{abstract}

{\bf Key Words.} Unconstrained optimization, multipliers, optimality conditions.

\vspace{10pt} {\bf AMS(MOS) subject classifications.} 65K05,
54H25.
\section{Introduction}
We are concerned here with the general, standard mathematical program
\begin{equation}\label{probgenn}
\hbox{Minimize in }\bx\in\R^N:\quad f(\bx)\quad\hbox{subject to}\quad \bbh(\bx)=\bcero,\bbg(\bx)\le\bcero,
\end{equation}
for a smooth, real function $f:\R^N\to\R$, and smooth, vector-valued mappings $\bbh:\R^N\to\R^n$, $\bbg:\R^N\to\R^m$. Karush-Kuhn-Tucker (KKT) optimality conditions are among the basic techniques taught and learnt in optimization courses (there are hundreds of textbooks on the subject, see for instance \cite{hiriarturruty}).
They involved, in addition to $\bx$ itself, multipliers $\bz\in\R^n$, $\by\in\R^m$, for all of the constraints to be respected in the problem. Under appropriate constraint qualifications, that are not part of our discussion here, local solutions of (\ref{probgenn}) are to be found among the triplets $(\bx, \by, \bz)\in\R^N\times\R^m\times\R^n$ complying with
\begin{gather}
\nabla f(\bx)+\bz\cdot\nabla\bbh(\bx)+\by\cdot\nabla\bbg(\bx)=\bcero,\label{optim}\\
\bbh(\bx)=\bcero,\quad \by\cdot\bbg(\bx)=0,\nonumber\\
\by\ge\bcero,\quad \bbg(\bx)\le\bcero.\nonumber
\end{gather}
These optimality conditions furnish fundamental insight and information into the solutions of (\ref{probgenn}). Under standard sets of constraint qualifications, solutions of optimality conditions furnish (local) solutions of (\ref{probgenn}). 
They are the guiding principle to design numerical algorithms to approximate those solutions. They are also the starting point of duality theory so fundamental to the understanding of mathematical programming. As it is well-known, the basic, rough idea of duality is to set up a new mathematical program, intimately connected to (\ref{probgenn}), and, in particular, designed with the main ingredients of that (primal) problem, but in which multipliers $(\by, \bz)$ play a central role in the form of dual variables. There is an intimate relationship between optimal solutions $\overline\bx$ of the primal, and optimal solutions $(\overline\by, \overline\bz)$ of the dual. 

Our point of view here is a bit different, and though it also deals with multipliers $(\by, \bz)$, we seek them, in association with the primal variable $\bx$,  not in the form of the optimal solution of another (dual) mathematical program, but rather as a fixed point of a suitable map. What is more important, the structure of that map is such that, under mild assumptions, the typical procedure consisting in iterating the action of such a map, converges to the optimal triplet $(\overline\bx, \overline\by, \overline\bz)$. It is remarkable that the map is so simple to define, and so easy to implement in practice, if we can rely on a efficient procedure for unconstrained optimization. 
The strategy of  using free, unconstrained programs to approximate the optimal solutions of general constrained optimization problems like (\ref{probgenn}) is quite natural, and appealing. This is the source of many fundamental algorithms utilized today. It will also be a main inspiration for us here. 

As a matter of fact, the method we want to examine is designed to deal just with inequality constraints so that there is no map $\bbh$ in (\ref{probgenn})
\begin{equation}\label{probgener}
\hbox{Minimize in }\bx\in\R^N:\quad f(\bx)\quad\hbox{subject to}\quad \bbg(\bx)\le\bcero,
\end{equation}
for a smooth, real function $f:\R^N\to\R$, and a smooth, vector-valued mapping $\bbg:\R^N\to\R^m$. This is not, in principle, a significant limitation because the equality constraint $\bbh(\bx)=\bcero$ can be, equivalently, translated  into $\bbh(\bx)\le\bcero$ together with $-\bbh(\bx)\le\bcero$.

A central role in our method is played by the master function
\begin{equation}\label{maestra}
L(\bx, \by)=f(\bx)+\sum_{k=1}^m e^{y^{(k)}g_k(\bx)},\quad
\by=\left(y^{(k)}\right)_{k=1, 2, \dots, m},\quad \bbg=\left(g_k\right)_{k=1, 2, \dots, m}.
\end{equation}
It is definitely reminiscent of the typical lagrangean for dual theory, though at the same time it is a bit different. We will understand soon the main reasons that support such choice. Notice that this master function is quite different from $f(\bx)+\exp{(\by\cdot\bbg(\bx))}$. We will comment on this later. 

Suppose that $L(\bx, \by)$ is strictly convex, and coercive in $\bx$ for every choice $\by\in\R^m_+$ of vectors with (strictly) positive coordinates. Our basic map 
$$
\bG(\by):\overline{\R^m_+}\mapsto\overline{\R^m_+}
$$
is the result of the composition of two operations:
\begin{enumerate}
\item For given $\by\in\R^m_+$, find (approximate) the (global) solution of the unconstrained problem
$$
\hbox{Minimize in }\bx\in\R^N:\quad L(\bx, \by).
$$
The passage from $\by$ to $\bx\equiv\bx(\by)$ is, therefore, a well-defined and smooth operation, under smoothness conditions for all the ingredients of the original problem.
\item Put
$$
\bG=\left(G_k\right)_{k=1, 2, \dots, m},\quad G_k(\by)=y^{(k)}e^{y^{(k)}g_k(\bx(\by))},
$$
when $\by\in\R^m_+$, and extend it by continuity for $\by\in\overline{\R^m_+}$. Here we take
$$
\R^m_+=\{\by\in\R^m: y^{(k)}>0\}
$$
while
$$
\overline\R^m_+=\{\by\in\R^m: y^{(k)}\ge0\}.
$$
\end{enumerate}
This extension by continuity deserves some comments. On the one hand, note that if some component $k$ of $\by$ vanishes, $y^{(k)}=0$, then trivially $G_k(\by)=0$ regardless of the value of $g_k(\by)$. This is not convenient, since $y^{(k)}=0$ must be somehow related to the constraint $g_k(\by)\le0$. On the other, notice that, after all, the constraint $g_k(\bx)\le0$ is equivalent to $y^{(k)}g_k(\bx)\le0$ for every positive $y^{(k)}$, but there is definitely a discontinuity if we set $y^{(k)}=0$, for then the constraint drops out. In other words, the optimization problem
$$
\hbox{Minimize in }\bx\in\R^N:\quad f(\bx)\quad\hbox{subject to}\quad y^{(k)} g_k(\bx)\le0\hbox{ for all }k, 
$$
for a fixed vector $\by$ with strictly positive components $y^{(k)}$ is equivalent to (\ref{probgener}). However, if some of the components of $\by$ vanish, then the corresponding constraint drops out, and so there is clearly a lack of continuity. 

We assume, for the time being, that this extension is possible. If it is so, then the values of $\bG(\by)$ for $\by\in\overline{\R^m_+}\setminus\R^m_+$ are obtained by taking limits of $\bG(\by_j)$ when $\by_j\to\by$, and $\by_j\in\R^m_+$. It is important to stress this fact because it amounts to a certain stability of the map $\bG$ at those vectors $\by\in\overline{\R^m_+}$. 

A somewhat surprising fact that places this map $\bG$ into perspective is the following.
\begin{proposition}\label{primera}
Under the assumption that $L(\bx, \by)$ is strictly convex, and coercive in $\bx$ for all $\by\in\R^m_+$, suppose a certain vector $\overline\by\in\overline{\R^m_+}$ is a locally stable, fixed point for $\bG$ in the sense that the reiteration of the action of $\bG$ starting in a vicinity of $\overline\by$ converges to $\overline\by$. Then $\overline\bx=\bx(\overline\by)$ is a (local) solution of (\ref{probgener}), even if it is not a solution of the corresponding optimality conditions \eqref{optim}.
\end{proposition}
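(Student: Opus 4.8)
The plan is to exploit the two structural facts that define the pair $(\overline\bx,\overline\by)$: that $\overline\bx=\bx(\overline\by)$ minimizes the \emph{unconstrained} functional $\bx\mapsto L(\bx,\overline\by)$, and that $\bG(\overline\by)=\overline\by$. The first gives a stationarity identity, the second forces every constraint carrying a nonzero multiplier to be active at $\overline\bx$, and a one-line comparison estimate then delivers $f(\overline\bx)\le f(\bx)$ for all feasible $\bx$.

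In detail, I would first write the first-order (stationarity) condition for the minimizer,
\[
\nabla f(\overline\bx)+\sum_{k=1}^{m}\overline y^{(k)}e^{\overline y^{(k)}g_k(\overline\bx)}\,\nabla g_k(\overline\bx)=\bcero,
\]
valid because $L(\cdot,\overline\by)$ is strictly convex and coercive for $\overline\by\in\R^m_+$ and, for $\overline\by$ on the boundary of $\overline{\R^m_+}$, because $\overline\bx$ is a limit of such minimizers (using continuity of $\by\mapsto\bx(\by)$, so $\bx(\by_j)\to\overline\bx$ whenever $\by_j\to\overline\by$). Reading the fixed-point identity coordinatewise, $\overline y^{(k)}=\overline y^{(k)}e^{\overline y^{(k)}g_k(\overline\bx)}$ for every $k$ — the right-hand side being exactly the limit defining $G_k(\overline\by)$, again by continuity of $\bx(\cdot)$ — so whenever $\overline y^{(k)}>0$ we may cancel and obtain $e^{\overline y^{(k)}g_k(\overline\bx)}=1$, i.e.\ $g_k(\overline\bx)=0$. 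Granting for the moment that $\bbg(\overline\bx)\le\bcero$: for any feasible $\bx$ one has $\overline y^{(k)}g_k(\bx)\le0$, hence $e^{\overline y^{(k)}g_k(\bx)}\le1$ and $L(\bx,\overline\by)\le f(\bx)+m$; while $g_k(\overline\bx)=0$ when $\overline y^{(k)}>0$ and $e^{\overline y^{(k)}g_k(\overline\bx)}=1$ trivially when $\overline y^{(k)}=0$, so $L(\overline\bx,\overline\by)=f(\overline\bx)+m$. Combining these with $L(\overline\bx,\overline\by)\le L(\bx,\overline\by)$ gives $f(\overline\bx)\le f(\bx)$, so $\overline\bx$ solves \eqref{probgener} (globally, in particular locally; if $L(\cdot,\overline\by)$ is only a local minimum at $\overline\bx$, the same estimate restricted to a neighbourhood still yields a local solution).

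The delicate point, and the step I expect to be the genuine obstacle, is the feasibility $\bbg(\overline\bx)\le\bcero$ for those indices $k$ with $\overline y^{(k)}=0$: there the fixed-point equation degenerates to $0=0$, and a plain limit computation only yields $G_k(\overline\by)=0$, which is consistent with any sign of $g_k(\overline\bx)$. I would try to recover $g_k(\overline\bx)\le0$ precisely from the standing hypothesis that $\bG$ \emph{extends continuously} to $\overline{\R^m_+}$ — the mechanism being that along $\by_j\to\overline\by$ with $y_j^{(k)}>0$ the factor $e^{y_j^{(k)}g_k(\bx(\by_j))}$ cannot drive $g_k(\bx(\by_j))$ to the infeasible side without spoiling that limit — or, failing a clean argument, restrict the statement to interior fixed points $\overline\by\in\R^m_+$, where every constraint is active and feasibility is immediate from the previous paragraph.

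As for the closing clause ``even if it is not a solution of \eqref{optim}'': the stationarity identity above carries coefficients $\overline y^{(k)}e^{\overline y^{(k)}g_k(\overline\bx)}$ rather than $\overline y^{(k)}$; these coincide on the active set, but the comparison estimate never requires the classical multiplier rule to be \emph{solvable}. In degenerate configurations — the prototypical one being an equality constraint rewritten as $g_0=\tfrac12|\bbh|^2\le0$, for which $\nabla g_0$ vanishes at every admissible point and so decouples from its multiplier in \eqref{optim} — the direct optimality system may have no solution at $\overline\bx$, while the estimate of the second paragraph still certifies optimality. I would simply note that this is the situation the last sentence of the proposition anticipates; it requires no further computation.
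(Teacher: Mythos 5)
Your central comparison argument is essentially the paper's own: the fixed-point identity gives complementary slackness $\overline y^{(k)}g_k(\overline\bx)=0$ for every $k$, then for any feasible competitor $\bx$ one has $\overline y^{(k)}g_k(\bx)\le 0=\overline y^{(k)}g_k(\overline\bx)$, so $L(\bx,\overline\by)-f(\bx)\le L(\overline\bx,\overline\by)-f(\overline\bx)$, and minimality of $\overline\bx$ for $L(\cdot,\overline\by)$ forces $f(\overline\bx)\le f(\bx)$. That part is correct and matches the paper (the paper phrases it as a contradiction with a hypothetical $\tilde\bx$ satisfying $f(\tilde\bx)<f(\overline\bx)$, but it is the same one-line estimate).

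The genuine gap is exactly where you flag it: feasibility of $\overline\bx$ for indices with $\overline y^{(k)}=0$, and your proposed mechanism does not close it. Along $\by_j\to\overline\by$ with $y_j^{(k)}\to 0$ one has $y_j^{(k)}e^{y_j^{(k)}g_k(\bx(\by_j))}\to 0=\overline y^{(k)}$ \emph{regardless} of the sign of $g_k(\overline\bx)$, so the continuous extension of $G_k$ is never ``spoiled'' and the identity $G_k(\overline\by)=\overline y^{(k)}$ holds vacuously; no contradiction can be extracted from the limit alone. The paper's argument is of a different kind: if $g_k(\overline\bx)>0$, then by continuity of $\by\mapsto\bx(\by)$ one has $g_k(\bx(\by))>0$ for all $\by\in\R^m_+$ near $\overline\by$, hence $G_k(\by)=y^{(k)}e^{y^{(k)}g_k(\bx(\by))}>y^{(k)}$ \emph{strictly} throughout such a neighbourhood, i.e.\ the map pushes the $k$-th component away from zero; the paper then excludes $\overline\by$ as a fixed point in the ``stable'' sense it attaches to boundary points obtained as limits (this is the role of the stability discussion preceding the statement). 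You should reproduce this repulsion argument rather than appeal to the existence of the limit. Your fallback of restricting to interior fixed points $\overline\by\in\R^m_+$ is not an acceptable repair either: the case $\overline y^{(k)}=0$ is precisely the generic case of an inactive constraint, so discarding it guts the proposition. The closing paragraph on the clause about \eqref{optim} is fine as commentary and needs nothing further.
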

\begin{proof}
If $\overline\by$ is a true fixed point for $\bG$, then we should have
$$
\overline y^{(k)}=\overline y^{(k)}e^{\overline y^{(k)}g_k(\overline\bx)}.
$$
This equation amounts to two possibilities: either $\overline y^{(k)}=0$, or else $1=e^{\overline y^{(k)}g_k(\overline\bx)}$, i.e., $\overline y^{(k)}g_k(\overline\bx)=0$.  At any rate, $\overline y^{(k)}g_k(\overline\bx)=0$ for all $k$. Assume $\overline y^{(k)}$ vanishes. Then, as emphasized earlier just before the statement of this proposition, there is some stability of $\bG$ around $\overline\by$ in the sense that $\bG$ is defined over $\overline{\R^m_+}\setminus\R^m_+$ through continuous extension,  
in such a way that if $g_k(\overline\bx)>0$, by continuity
$$
g_k(\bx(\by))>0, \quad y^{(k)}>0,
$$
for $\by$ in small neighborhoods of $\overline\by$. But then
$$
G_k(\by)=y^{(k)}e^{y^{(k)}g_k(\bx(\by))}>y^{(k)}
$$
systematically in such a vicinity, and such $\overline\by$ could not be a fixed point of $\bG$ found by reiteration of the action of $\bG$. Therefore $\overline\by$ with $\overline y^{(k)}=0$ cannot be a fixed point for $\bG$ unless $g_k(\overline\bx)\le0$. This argument implies that $\overline\bx$ is indeed feasible for problem \eqref{probgener}. 

Suppose now that we could find a vector $\tilde\bx$, not far from $\overline\bx$, such that $f(\tilde\bx)<f(\overline\bx)$ and $\bbg(\tilde\bx)\le\bcero$. Because $\overline\by\ge\bcero$, we would have $\by^{(k)} g_k(\tilde\bx)\le0=\overline y^{(k)} g_k(\overline\bx)$ for all $k$. Hence, it is clear that
$$
L(\tilde\bx, \overline\by)<L(\overline\bx, \overline\by),
$$
but this contradicts the very nature of $\overline\bx$ as a local minimum for $L(\cdot, \overline\by)$. This contradiction proves the statement. 
\end{proof}

The numerical procedure that arises from this perspective is amazingly simple to implement, but relies in a fundamental way on being capable of efficiently approximating the map $\bx(\by)$. This amounts to unconstrained optimization. It reads:
\begin{enumerate}
\item Initialization. Take  $\by_0\in\R^m_+$, $\by_0>\bcero$, and $\bx_0\in\R^N$ in an arbitrary way, or appropriately located in a certain valley. For instance, $\by_0=\buno$, $\bx_0=\bcero$. 
\item Iterative step until convergence. Suppose we have computed $\by_j$, and $\bx_j$. 
\begin{enumerate}
\item Solve for the unconstrained optimization problem 
$$
\hbox{Minimize in }\bz\in\R^N: \quad L(\bz, \by_j)=f(\bz)+\sum_{k=1}^me^{\by_j^{(k)} g_k(\bz)}
$$
starting from the intial guess $\bx_j$. Let $\bx_{j+1}$ be such (local) minimizer.
\item If $\by_j\cdot \bbg(\bx_{j+1})$
vanishes (i.e. is reasonably small), stop, take $\by_j$ as the multiplier of the problem, and $\bx_{j+1}$ as the solution of the constrained problem.
\item If $\by_j\cdot \bbg(\bx_{j+1})$ does not vanish, update 
\begin{equation}\label{actualizar}
y_{j+1}^{(k)}=e^{y_j^{(k)} g_k(\bx_{j+1})}y_j^{(k)}
\end{equation}
for all $k=1, 2, \dots, m$. 
\end{enumerate}
\end{enumerate}

The intuition after this algorithm is pretty clear. In Section \ref{dos}, we try to justify why it is plausible to expect that this algorithm should furnish, at least, reasonable results. Each value $\by\in\R^m_+$ establishes an exponential barrier for the constraints, in such a way that if the minimizer $\bx(\by)$ turns out to be non-feasible, then the barrier should be intensified. This is what the update rule (\ref{actualizar}) does in that case. If, on the other hand, the constraint is met, then the barrier should be relaxed so as letting the objective function $f$ to seek ``more freely without restriction" its minimum. This is again accomplished by the update rule. 

This is a good place to stress how the form of $L(\bx, \by)$ cannot be $f(\bx)+\exp(\by\cdot\bbg(\bx))$, because for this other choice, the update rule for the auxiliary variable $\by$ 
$$
\by_{j+1}=e^{\by_j\cdot\bbg(\bx_{j+1})}\by_j
$$
would be the same for all components, and this is too rigid to work well: each component $k$ should adapt to its corresponding constraint separately from the others. 

Beyond the convergence theorems that we will prove, Proposition \ref{primera} is a  clear and powerful statement. In practice, without any further concern about assumptions, one can use the above algorithm. If the variables $\bx$ and $\by$ do converge, the limit vector $\overline\bx$ has to be a (local) solution of (\ref{probgener}).  Indeed, the algorithm is quite flexible to the point that the set-valued map $\by\mapsto \bX(\by)$, where $\bX(\by)$ stands for the full set of local minima of $L(\cdot, \by)$, admits selections to approximate all of the (isolated) local minima of (\ref{probgenn}). 

Our main task here focuses on showing that, under appropriate standard hypotheses, this algorithm always converges to minima of the underlying constrained problem (\ref{probgener}). As a matter of fact, we introduce a main assumption that pretends to avoid singular situations. It plays the role of a certain constraint qualification, as it expresses the idea that local minima of the master function are feasible once the exponential barriers are sufficiently large. 

\begin{definition}\label{wb}
We say that problem \eqref{probgener} is well-balanced if for all $\by\in\R^m_+$ with all the components $y^{(k)}$ sufficiently large, we have $\bbg(\bx(\by))\le\bcero$. 
\end{definition}

Our main results follows. 

\begin{theorem}\label{principalg}
Suppose the cost function $f:\R^N\to\R$, and the  components of the constraint map $\bbg:\R^N\to\R^m$ determining problem \eqref{probgener} comply with:
\begin{enumerate}
\item they all are smooth, and convex;
\item the corresponding $L(\bx, \by)$ is coercive in $\bx$ for every fixed $\by$ with positive components;
\item the problem is well-balanced according to Definition \ref{wb}.
\end{enumerate}
Then the above algorithm always converges to a (global) minimizer for (\ref{probgener}).
\end{theorem}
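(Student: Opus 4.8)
The plan is to read the theorem, through Proposition~\ref{primera}, as a statement about the multiplier iteration $\by_{j+1}=\bG(\by_j)$: if that sequence converges to some $\overline\by\in\overline{\R^m_+}$, then continuity of $\bx(\by)$ — hence of $\bG$ — forces $\overline\by$ to be a fixed point of $\bG$, Proposition~\ref{primera} makes $\overline\bx=\bx(\overline\by)$ a local solution of \eqref{probgener}, and convexity of $f$ and of the $g_k$ promotes that local solution to a global one (when the minimizer fails to be unique, the conclusion should be read as $\dist(\bx_j,\arg\min)\to0$ together with $f(\bx_j)\to\min$). So the whole burden is: (i) the iterates $\{\by_j\}$, and with them $\{\bx_j\}$, stay bounded; and (ii) the bounded multiplier iteration converges.

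The first ingredient is a convexity-based a~priori bound. Picking $\by_0\in\R^m_+$ with all components large enough that $\bbg(\bx(\by_0))\le\bcero$ (possible by the well-balanced hypothesis) shows the feasible set of \eqref{probgener} is nonempty, and on it $f\ge L(\cdot,\by_0)-m$ is coercive, so \eqref{probgener} attains its minimum at some $\bx^*$ with value $m^*=f(\bx^*)$. Since $\bx(\by)$ minimizes $L(\cdot,\by)$ and $g_k(\bx^*)\le0$,
$$
f(\bx(\by))+\sum_{k=1}^m e^{y^{(k)}g_k(\bx(\by))}=L(\bx(\by),\by)\le L(\bx^*,\by)\le m^*+m
\qquad\text{for all }\by\in\overline{\R^m_+}.
$$
Unconditionally this already gives $f(\bx(\by))\le m^*+m$; and, once the orbit $\{\bx_j\}$ is known to stay in a fixed bounded set (so $f$ is bounded below along it), each $y_j^{(k)}g_k(\bx_{j+1})$ is bounded above by a single constant $C$. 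Feeding this into the update rule \eqref{actualizar} then gives $y_{j+1}^{(k)}\le e^{C}y_j^{(k)}$, so no component blows up faster than geometrically, and $g_k(\bx_{j+1})\le C/y_j^{(k)}$, so the constraint violation at $\bx_{j+1}$ dies out along any component whose multiplier tends to infinity. I would set up the boundedness of $\{\bx_j\}$ and of $\{\by_j\}$ jointly, since these estimates and the confinement of the orbit feed one another.

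The boundedness of $\{\by_j\}$ is where the well-balanced hypothesis genuinely does its work, and I expect this to be the technical core. If some component diverged along a subsequence, then on the steps where \emph{every} component of $\by_j$ is large the second part of Definition~\ref{wb} gives $\bbg(\bx_{j+1})\le\bcero$, which by \eqref{actualizar} makes every coordinate of $\by_{j+1}$ non-increasing — already incompatible with divergence. The subtlety is the coupling: a subsequence with $\max_k y_j^{(k)}\to\infty$ need not have all coordinates large, since some may be converging to a finite value or to $0$, so Definition~\ref{wb} does not apply verbatim. I would handle this by a bootstrapping/absorption argument: coordinates whose multiplier stays bounded contribute only an $O(1)$ term to the penalty, so in the limit the divergent coordinates effectively face the subproblem restricted to themselves, which inherits regularity and well-balancedness; combined with the geometric growth bound, this should force the putatively divergent coordinates to become simultaneously large, hence non-increasing, a contradiction. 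With $\sup_j|\by_j|<\infty$ in hand, continuity of $\bx(\cdot)$ on the compact closure of $\{\by_j\}$ (or coercivity of $L$ uniformly over a compact parameter set) gives $\sup_j|\bx_j|<\infty$.

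Finally, to pass from boundedness to convergence, take any subsequential limit $\by_{j_i}\to\overline\by$, $\bx_{j_i}\to\overline\bx$. For a coordinate with $\overline y^{(k)}>0$: were $g_k(\overline\bx)\ne0$, the factor $e^{y_j^{(k)}g_k(\bx_{j+1})}$ would stay bounded away from $1$ throughout a neighbourhood of the limit, pushing $y_j^{(k)}$ strictly monotonically in one direction there, which no subsequential limit of the orbit with $\overline y^{(k)}>0$ can tolerate; hence $g_k(\overline\bx)=0$. For a coordinate with $\overline y^{(k)}=0$, the stability argument from the proof of Proposition~\ref{primera} forces $g_k(\overline\bx)\le0$. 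Either way $\overline\by$ is a fixed point of $\bG$, so $\overline\bx$ is feasible and, by Proposition~\ref{primera} and convexity, a global minimizer; and since the a~priori bound squeezes $f(\bx_j)$ toward $m^*$, every subsequential limit is optimal and the iteration converges to the minimizer. In summary, the routine parts are the reduction via Proposition~\ref{primera}, the convexity estimate, and the fixed-point identification of the limit; the one genuinely delicate step is the boundedness of the multiplier orbit — turning the well-balanced hypothesis, which only speaks of \emph{all} multipliers being large, into control of the coupled, overshooting exponential dynamics when only \emph{some} of them are large.
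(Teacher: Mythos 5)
Your overall architecture is genuinely different from the paper's: you attack the full $m$-dimensional iteration head-on (a priori bound from convexity, boundedness of the multiplier orbit, subsequential limits are fixed points, then Proposition~\ref{primera}), whereas the paper reduces everything to the single-constraint case, where Lemmas~\ref{aux} and \ref{auxx} give the scalar map $G(y)=y e^{y g(\bx(y))}$ a monotone structure (the three inequalities \eqref{desigualdades}), and then handles $m\ge2$ by freezing all multipliers but one and studying the response curves $y_1(y_2)$, $y_2(y_1)$, whose intersection inside an invariant box $[0,\tilde y_1]\times[0,\tilde y_2]$ is the fixed point the iterates are driven to. Your observation that $L(\bx(\by),\by)\le L(\bx^*,\by)\le m^*+m$ is correct and useful, but the proposal has two genuine gaps precisely at the points you yourself flag as the core.

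First, the boundedness of $\{\by_j\}$ is asserted via a ``bootstrapping/absorption'' argument that is only named, not executed. The claim that the coordinates with bounded multipliers contribute an $O(1)$ penalty and that the divergent coordinates therefore ``face the subproblem restricted to themselves, which inherits regularity and well-balancedness'' does not follow from Definition~\ref{wb}: well-balancedness is a statement about $\bx(\by)$ when \emph{all} components of $\by$ are large, and says nothing about the minimizers of $f+\sum_{k\in S}e^{y^{(k)}g_k}$ plus a frozen $O(1)$ perturbation for a proper subset $S$ of indices. This is exactly the coupling difficulty the paper's coordinate-freezing device is designed to tame: freezing $y_1$ produces a new one-constraint program with objective $f+e^{y_1g_1}$, to which Theorem~\ref{onediversion} and Proposition~\ref{unidimensional} apply directly, yielding the bound $\tilde y_2>y_2(y_1)$ uniformly on $[0,\tilde y_1]$ and hence the invariant box. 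Without some such mechanism your boundedness step is an unproven assertion. Second, your passage from boundedness to convergence is incomplete: the argument that a subsequential limit $\overline\by$ with $\overline y^{(k)}>0$ and $g_k(\overline\bx)\ne0$ would push $y_j^{(k)}$ ``strictly monotonically in one direction'' only applies while the orbit stays in a neighbourhood of $\overline\by$; it does not exclude the orbit leaving and re-entering that neighbourhood infinitely often, so it does not show that every limit point is a fixed point. And even granting that, ``all subsequential limits are fixed points'' does not yield convergence of $\by_j$ without uniqueness of the fixed point or a Lyapunov/monotonicity structure — which is precisely what the paper extracts from \eqref{desigualdades} (e.g.\ $G$ non-decreasing where $\overline g\ge0$, and $G(\overline y)\le\overline y$ propagating to all $y\ge\overline y$). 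A minor further slip: the bound $f(\bx_j)\le m^*+m$ does not ``squeeze $f(\bx_j)$ toward $m^*$''; global optimality of the limit must come from Proposition~\ref{primera} plus convexity, not from that estimate.
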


The property of being well-balanced, though suitable for the proof of Theorem \ref{principalg},  may be hard to check in practice. In some cases, it is impossible because it is not correct. We will actually see that our way of dealing with equality constraints leads to a situation where the resulting problem cannot be well-balanced. Fortunately, there is a version of Proposition \ref{primera} that allows for the possibility of having some components of vectors $\by$ go to infinity in the above iterative process, as long as we keep under control the products $y^{(k)}g_k(\bx(\by))$ as the iterations proceed. 

By removing the convexity conditions, we are typically left with a local convergence theorem. Different local solutions are reached by different initializations in the algorithm, as remarked above. The practical implication of our analysis, as suggested earlier, is that optimal solutions of mathematical programs will be captured by this algorithm whenever they exist. 

The main goal of the paper is, in addition to introducing the algorithm itself, to prove the convergence  result Theorem \ref{principalg}.  We will proceed through several steps of increasing generality. 

The kind of exponential penalty functions used in a fundamental way in this contribution have been utilized and described before, but in a standard context taken as barriers. See for instance \cite{bertsekas}. Apparently (\cite{fiaccomccormick}), they first were considered by T. S. Motzkin who, in 1952,  suggested the use of exponentials for satisfying a system of linear inequalities. 
One of our favorites sources for numerical optimization is \cite{nocedal}.

\section{A perspective for constrained problems based on unconstrained minimization}\label{dos}
We will start with the simple basic problem
\begin{equation}\label{probbas}
\hbox{Minimize in }\bx\in\R^N:\quad f(\bx)\quad\hbox{ subject to }\quad g(\bx)\le0,
\end{equation}
where  both $f$, and $g$ are smooth functions. We therefore have a single inequality constraint. 

We would like to design an iterative procedure to approximate solutions for (\ref{probbas}) in an efficient, practical, accurate way. We introduce our initial demands in the form:
\begin{enumerate}
\item The main iterative step is to be an unconstrained minimization problem, and as such its corresponding objective function must be defined in all of space (exterior point methods). 
\item More specifically, we would like to design a real function $H$ so that the main iterative step of our procedure be applied to the augmented cost function $L(y, \bx)=f(\bx)+H(y g(\bx))$ for the $\bx$-variable. We hope to take advantage of the joint dependence upon $y$ and $\bx$ inside the argument for $H$. Notice that letting $y$ out of $H$ may not mean a real change as we would be back to (\ref{probbas}) with a $g$ which would be the composition $H(g(\bx))$. 
\item The passage from one iterative step to the next is performed through an update step for the variable (multiplier) $y$.
\item Convergence of the scheme should lead to a solution of (\ref{probbas}). 
\end{enumerate}
Notice that 
\begin{equation}\label{gradiente}
\nabla_\bx L(y, \bx)=\nabla f(\bx)+H'(y g(\bx))y\nabla g(\bx),
\end{equation}
and that optimality conditions for (\ref{probbas}) read 
\begin{equation}\label{optimalidad}
\nabla f(\bx)+y\nabla g(\bx)=0, y g(\bx)=0, \quad y\ge0, g(\bx)\le0.
\end{equation}
Thus each main iterative step enforces the main equation in (\ref{optimalidad}), the one involving gradients and derivatives. But we would like to design the function $H(t)$ to ensure that as a result of the iterative procedure, the other conditions in (\ref{optimalidad}) are also met.

The following features seem to be very convenient:
\begin{enumerate}
\item Variable $\bx$ will always be a solution of 
$$
\nabla f(\bx)+H'(y g(\bx))y\nabla g(\bx)=\bcero.
$$
\item Variable $y$ will always be non-negative (in practice strictly positive but possibly very small). Comparison of (\ref{gradiente}) with (\ref{optimalidad}) leads to the identification
$y\mapsto H'(y g(\bx))y$, and so we would like $H'\ge0$. 
\item The multiplier $H'(y g(\bx))y$ can only vanish if $y$ does. The update rule for the variable $y$  should be $y\mapsto H'(y g(\bx))y$. If at some step we hit  the true value of the multiplier $y$, then simultaneously $y g(\bx)=0$, and so
we would also like to have $H'(0)=1$.
\item If $g(\bx)>0$, then the update rule above for $y$ must yield a higher value for $y$ so as to force in a more intense way in the next iterative step the feasible inequality $g\le0$. Hence, $H''>0$, or $H$, convex (for positive values). In addition, $H''(t)\to+\infty$ when $t\to+\infty$. 
\item If $g(\bx)$ turns out to be (strictly) negative, then we would like $y$ to become smaller so as to let the minimization of $f$ proceed with a lighter interference from the inequality constraint. This again leads to $G$ convex (for negative values).
\item The optimality condition $y g(\bx)=0$ becomes $H'(y g(\bx))y g(\bx)=0$. Thus the function $H'(t)t$ can only vanish if $t=0$. In particular, $H'>0$. 
\end{enumerate}

All of these reasonable conditions impose the requirements
$$
H>0, H'>0, H'(0)=1, H''>0, H''(t)\to\infty, \hbox{ if }t\to\infty.
$$
Possibly, the most familiar choice if $H(t)=e^t$, and this is the one we will select. 

The iterative procedure is then as follows.
\begin{enumerate}
\item Initialization. Take  $y_0>0$, and $\bx_0\in\R^N$ in an arbitrary way. For instance, $y_0=1$, $\bx_0=\bcero$. 
\item Iterative step until convergence. Suppose we have $y_j$, $\bx_j$. 
\begin{enumerate}
\item Solve for the unconstrained optimization problem 
$$
\hbox{Minimize in }\bz\in\R^N: \quad f(\bz)+e^{y_j g(\bz)}
$$
starting from the intial guess $\bx_j$. Let $\bx_{j+1}$ be such (local) minimizer.
\item If $y_jg(\bx_{j+1})$
vanishes, stop: take $y_j$ as the multiplier of the problem, and $\bx_{j+1}$ as the solution of the constrained problem.
\item If $y_jg(\bx_{j+1})$ does not vanish, update 
$$
y_{j+1}=e^{y_j g(\bx_{j+1})}y_j.
$$
\end{enumerate}
\end{enumerate}

\section{Some preliminaries}
According to Proposition \ref{primera}, if we are interested in a convergence theorem to a solution of \eqref{probbas}, 
all we need to care about is to ensure the smoothness and convexity conditions for the master function $L(\bx, \by)$, and then show convergence of our algorithm to a fixed point of the map $\bG$. One can envision to write down some hypotheses so that there is some $M>0$ in such a way that 
$$
\bG:[0, M]^m\mapsto[0, M]^m.
$$
In this case, Brower's fixed point theorem would let us conclude indeed the existence of fixed points for $\bG$. However, that would not imply, in principle, a convergence result. At any rate, one needs to argue first about the continuous extension of $\bG$ to all of $\overline{\R^m_+}$. 
\begin{proposition}
Suppose all functions involved in \eqref{probgener} are smooth, and convex, and that $L(\bx, \by)$ is coercive, and strictly convex in $\bx$, uniformly in $\by$, for all $\by\in\R^m_+$. Define the map $\bG(\by):\R^m_+\to\R^m_+$ as indicated above. Then $\bG$ is locally Lipschitz continuous, and can be extended in a unique, continuous way to $\bG:\overline{\R^m_+}\mapsto\overline{\R^m_+}$ 
\end{proposition}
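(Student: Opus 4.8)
The plan is to split the statement into its two parts: local Lipschitz continuity of $\bG$ on the open orthant $\R^m_+$, and the existence and uniqueness of a continuous extension to $\overline{\R^m_+}$. For the first part I would argue that $\bx(\by)$ is smooth (hence locally Lipschitz) on $\R^m_+$ by the implicit function theorem. Coercivity of $L(\cdot,\by)$ gives existence of a global minimizer and strict convexity its uniqueness, so $\bx(\by)$ is the unique solution of $\nabla_\bx L(\bx,\by)=\bcero$. Since $f$ and the $g_k$ are smooth, the map $(\bx,\by)\mapsto\nabla_\bx L(\bx,\by)$ is smooth, and its $\bx$-Jacobian is the Hessian $\nabla^2_{\bx\bx}L(\bx,\by)$, which by uniform strict convexity is positive definite, hence invertible, at every point. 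The implicit function theorem then produces, near each $\by\in\R^m_+$, a smooth local branch of critical points; uniqueness of the critical point forces this branch to be $\bx(\by)$, so $\bx\in C^\infty(\R^m_+;\R^N)$. Consequently each $G_k(\by)=y^{(k)}e^{y^{(k)}g_k(\bx(\by))}$ is a product/composition of smooth maps, and on any compact subset of $\R^m_+$ the quantities $y^{(k)}$, $\bx(\by)$ and hence the exponent stay bounded, so $G_k$ is locally Lipschitz; also $G_k(\by)>0$ there, so $\bG(\R^m_+)\subset\R^m_+$.

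For the extension, fix $\overline\by\in\overline{\R^m_+}$ and any sequence $\by_j\to\overline\by$ with $\by_j\in\R^m_+$. I would first note that $L(\bx,\by)$ is jointly continuous on $\R^N\times\overline{\R^m_+}$ — when $y^{(k)}=0$ the $k$-th term degenerates to the constant $1$ — and that, since the coercivity and strict convexity hypotheses are assumed uniform in $\by$, a continuity/limiting argument shows they persist at $\overline\by$ (the Hessian $\nabla^2_{\bx\bx}L(\bx,\by)$ depends continuously on $\by$ up to the boundary). From $L(\bx(\by_j),\by_j)\le L(\bcero,\by_j)$ together with uniform coercivity, the minimizers $\bx(\by_j)$ lie in a fixed bounded set; extract $\bx(\by_{j'})\to\bx^*$. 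Because $L(\cdot,\by_j)\to L(\cdot,\overline\by)$ uniformly on compact sets, $\bx^*$ minimizes $L(\cdot,\overline\by)$, and strict convexity of $L(\cdot,\overline\by)$ makes that minimizer unique; hence the full sequence converges to it and the limit is independent of the chosen sequence. This defines $\bx(\overline\by)$ unambiguously, and setting $G_k(\overline\by):=\overline y^{(k)}e^{\overline y^{(k)}g_k(\bx(\overline\by))}$ (which equals $0$ when $\overline y^{(k)}=0$) gives a continuous extension $\bG:\overline{\R^m_+}\to\overline{\R^m_+}$. Uniqueness of this extension is automatic, since $\R^m_+$ is dense in $\overline{\R^m_+}$ and a continuous function on a dense set has at most one continuous extension.

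The main obstacle is precisely the passage to the boundary: one needs to know both that the minimizers cannot escape to infinity as $\by_j\to\overline\by$ and that the limiting problem still has a unique minimizer. These are exactly what the \emph{uniform} coercivity and \emph{uniform} strict convexity assumptions deliver — drop the uniformity and one risks losing either the compactness of $\{\bx(\by_j)\}$ or the identifiability of its limit. A minor technical point worth monitoring is that the first and second $\bx$-derivatives of the exponential terms carry factors $y^{(k)}$ and $(y^{(k)})^2$; these are harmless because every estimate above is performed locally in $\by$, where such factors are bounded.
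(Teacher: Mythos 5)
Your argument is correct, and the interior half (smoothness of $\by\mapsto\bx(\by)$ via the implicit function theorem applied to $\nabla_\bx L=\bcero$, with invertibility of the Hessian supplied by uniform strict convexity, followed by composition) is exactly what the paper does. Where you diverge is in the passage to the boundary of $\overline{\R^m_+}$. The paper dispatches this in one line: it asserts that the local Lipschitz continuity of $\bG$ on the open orthant ``implies the continuous extension, in a unique way.'' Read literally, that inference is not automatic --- a function can be locally Lipschitz on an open set and still fail to extend continuously to its closure (think of $1/t$ on $(0,1)$); what is really needed is that the Lipschitz constants stay bounded on neighborhoods of boundary points, which in turn rests on the inverse Hessian and the minimizers $\bx(\by)$ remaining bounded as $\by$ approaches the boundary. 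You instead give a direct variational argument: uniform coercivity confines the minimizers $\bx(\by_j)$ to a fixed compact set, $L(\cdot,\by_j)\to L(\cdot,\overline\by)$ uniformly on compacts, any subsequential limit minimizes the limit functional, and strict convexity (which survives the passage to the boundary) identifies the limit uniquely, so the whole sequence converges and the extension is well defined; uniqueness then follows from density. This is more work than the paper's one-liner, but it closes precisely the gap the paper leaves implicit, and it makes visible why the \emph{uniformity} of the coercivity and strict convexity hypotheses is needed. Both routes are legitimate; yours is the more self-contained one at the boundary.
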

\begin{proof}
Under the assumptions written in the statement, the map taking each vector $\by\in\R^m_+$ into the global minimizer $\bx(\by)$ of $L(\bx, \by)$ is well-defined and smooth. As a matter of fact, we have the system 
\begin{equation}\label{sistemaa}
\nabla f(\bx(\by))+\sum_{k=1}^m y^{(k)}e^{y^{(k)}g_k(\bx(\by))} \nabla g_k(\bx(\by))=\bcero,
\end{equation}
determining implicitly, in a unique way, the vector $\bx=\bx(\by)$. The Implicit Function Theorem implies then that the dependence $\by\mapsto\bx(\by)$ is smooth, and, in particular, locally Lipschitz continuous.   Notice how the gradient of $\nabla_\bx L(\bx, \by)$, which is the right-hand side of \eqref{sistemaa}, with respect to $\bx$ is the hessian of the master function $L(\bx, \by)$ with respect to $\bx$, and so it is non-singular due to the uniform strict convexity assumed in the statement. 
As a consequence, the composition defining the mapping $\bG(\by)$ is also (locally) Lipschitz continuous. Finally, notice that this lipschitzianity implies the continuous extension, in a unique way, of $\bG$ to $\overline{\R^m_+}$. 
\end{proof}

Under the main hypothesis contained in Definition \ref{wb}, it turns out that the mapping $\bG$ can be regarded as a map from a certain cube into itself. 
\begin{proposition}
Suppose, in addition to the hypotheses of the previous proposition, that problem \eqref{probgener} is well-balanced. Then there is $M>0$ such that
$$
\bG:[0, M]^m\mapsto[0, M]^m.
$$
\end{proposition}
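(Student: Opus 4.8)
The plan is to localize the minimizers $\bx(\by)$ inside a fixed compact set, to derive a uniform \emph{a priori} bound on the products $y^{(k)}g_k(\bx(\by))$, and then to exploit well-balancedness in order to exclude the only dangerous regime, namely ``$y^{(k)}$ large while $g_k(\bx(\by))>0$''.

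\emph{Localization and a priori bound.} Since the problem is well-balanced, choosing $\by_0$ with all components large enough yields a feasible point $\bx^*:=\bx(\by_0)$, i.e. $\bbg(\bx^*)\le\bcero$. Hence, for every $\by\in\R^m_+$,
$$
L(\bx(\by),\by)\le L(\bx^*,\by)=f(\bx^*)+\sum_{k=1}^m e^{y^{(k)}g_k(\bx^*)}\le f(\bx^*)+m,
$$
and the uniform coercivity of $L(\cdot,\by)$ assumed in the statement forces $\bx(\by)$ to remain in a fixed compact set $K$ for all $\by\in\R^m_+$; in particular $c_0:=\min_K f$ is finite. Each exponential then satisfies $e^{y^{(k)}g_k(\bx(\by))}\le L(\bx(\by),\by)-f(\bx(\by))\le f(\bx^*)+m-c_0$, so that
$$
y^{(k)}g_k(\bx(\by))\le s_0:=\ln\big(f(\bx^*)+m-c_0\big)\qquad\text{for all }\by\in\R^m_+,\ \text{all }k,
$$
a bound depending only on the data.

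\emph{Reduction and choice of $M$.} Fix an index $k$ and $\by\in\R^m_+\cap[0,M]^m$. If $g_k(\bx(\by))\le0$, then $G_k(\by)=y^{(k)}e^{y^{(k)}g_k(\bx(\by))}\le y^{(k)}\le M$. If instead $g_k(\bx(\by))>0$, the previous line gives $G_k(\by)\le y^{(k)}e^{s_0}$, so it is enough to bound $y^{(k)}$ by a constant $Y_k$ of the problem. Granting this, $M:=\max\big(1,\,e^{s_0}\max_k Y_k\big)$ works: one gets $G_k(\by)\le M$ in both cases (the lower bound $G_k(\by)\ge0$ being automatic), and the inclusion $\bG(\by)\in[0,M]^m$ then extends to all of $[0,M]^m$ by the continuity of $\bG$. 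Everything thus reduces to the claim: \emph{whenever $g_k(\bx(\by))>0$, one has $y^{(k)}\le Y_k$}, with $Y_k$ independent of $\by$.

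This last claim is where well-balancedness and the regularity of $g_k$ are genuinely used, and where I expect the real work to lie. Heuristically, if $g_k(\bx(\by))>0$ with $y^{(k)}$ arbitrarily large, then $g_k(\bx(\by))\le s_0/y^{(k)}\to 0$, so along a suitable sequence $\bx(\by_n)$ converges, by the compactness above, to some $\overline\bx\in K$ with $g_k(\overline\bx)=0$; in the limit the $k$-th exponential acts as the hard constraint $g_k\le0$, so $\overline\bx$ should minimize the master function with its $k$-th exponential removed over $\{g_k\le0\}$, and one must rule out that this minimizer sits on $\{g_k=0\}$. The regularity of $g_k$ (so $\nabla g_k\ne\bcero$ wherever $g_k>0$), together with well-balancedness (all-large multipliers producing feasible minimizers, i.e. the barrier really drives $\bx(\by)$ into $\{g_k\le0\}$), is what should keep the minimizer off $\{g_k=0\}$ and the associated multiplier bounded. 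Turning this limiting argument into a rigorous proof — in particular keeping control of the other, possibly diverging, components $y^{(j)}$, $j\ne k$, along the sequence — is the delicate point.
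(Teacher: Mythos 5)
Your localization step and the a priori bound $y^{(k)}g_k(\bx(\by))\le s_0$ are correct and genuinely useful: comparing $L(\bx(\by),\by)$ with $L(\bx^*,\by)$ at a feasible point $\bx^*$ supplied by well-balancedness, together with the uniform coercivity, does confine $\bx(\by)$ to a fixed compact set and caps every exponential; the reduction to the single claim ``$g_k(\bx(\by))>0$ forces $y^{(k)}\le Y_k$'' is also sound. For reference, the paper deliberately omits a proof of this proposition and defers it to the convergence analysis, where the box invariance is obtained by a different mechanism: the one-dimensional monotonicity facts of Lemmas \ref{aux} and \ref{auxx} (once $G_k\le y^{(k)}$, it stays below the diagonal as $y^{(k)}$ grows with the other components frozen), combined with the regularity of each $g_k$ via Proposition \ref{unidimensional}. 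Your energy-bound route is therefore a genuinely different, and in its first half cleaner, approach.

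The problem is that your key claim is exactly where all the work lies, and you leave it at the heuristic level; moreover, as stated it does not follow from the hypotheses without a substantial further argument. Well-balancedness (Definition \ref{wb}) only guarantees $\bbg(\bx(\by))\le\bcero$ when \emph{all} components of $\by$ are large, so it says nothing about the regime ``$y^{(k)}$ huge, $y^{(j)}$ moderate or small for $j\ne k$'', which is precisely the regime you must exclude. Your sketched limiting argument meets two concrete obstacles. First, in the stationarity equation $\nabla f(\bx_n)+\sum_jG_j(\by_n)\nabla g_j(\bx_n)=\bcero$ the coefficients $G_j(\by_n)$ for $j\ne k$ need not stay bounded along the sequence (they are controlled only when $g_j(\bx_n)$ stays negative and bounded away from $0$), so you cannot isolate the $k$-th term and deduce $G_k(\by_n)\,|\nabla g_k(\bx_n)|\le C$. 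Second, regularity of $g_k$ only forbids critical points where $g_k>0$; since $g_k(\bx_n)\to0^+$ along your sequence, $|\nabla g_k(\bx_n)|$ may still degenerate to $0$ --- this is exactly what happens when an equality constraint is encoded as $g=\frac12|\bbh|^2\le0$, the case the paper later identifies as \emph{not} well-balanced --- so even a bound on $G_k|\nabla g_k|$ would not bound $y^{(k)}$. To close the argument you would need to import the single-variable monotonicity structure of Section \ref{diuno} (freeze the other components and use that $G_k$ crosses and then stays below the diagonal), or a quantitative strengthening of well-balancedness; as written, the crucial claim is asserted, not proved.
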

We will not prove here this proposition because of two main reasons. On the one hand, we argue in the next paragraph that the existence of fixed points for $\bG$ does not imply a convergence theorem, which is the main fact we are after. On the other, the proof of this result will become pretty clear when we prove our main convergence theorem below. 

As a consequence of these two last propositions, the map $\bG$, under the appropriate assumptions, admits fixed points. By Proposition \ref{primera}, if $\overline\by$ is such a fixed point, then the vector $\overline\bx=\bx(\overline\by)$ is a solution of \eqref{probgener}. If we, a priori, know that there cannot be more than one solution for \eqref{probgener}, because of some strict convexity for instance, then the map $\bG$ cannot have more than one such fixed point. But even so, we cannot be sure if the iterates produced by our scheme (or whatever procedure) will converge to such a fixed point. There is no substitute for a convergence theorem in which we very closely analyze the behavior of the algorithm. We will therefore focus, without mentioning it explicitly anymore, on showing that iterates converge to fixed points $\overline\by$ of $\bG$, and then Proposition \ref{primera} permits us conclude that the corresponding vectors $\overline\bx$ are the sought solution of \eqref{probgener}.

Although we are asking for main structural hypotheses, in the form of convexity, for the functions determining problem \eqref{probgener} to ensure that the map taking each vector $\by\in\R^m_+$ into the minimum of the master function $L(\bx, \by)$ is single-valued, well-defined, and smooth, when those do not hold, we would have a set-valued mapping $\bX(\by)$ of all local minima of the master function. 
We would be left with a convergence fact for local minima as usual. Indeed, the initialization point for each iteration in our basic procedure determines a suitable continuous selection that leads, when convergence takes place, to a local optimal solution of the problem. 

We finally elaborate a bit on Definition \ref{wb}. It does not look sufficiently explicit for practical purposes.  A better criterium in this regard is the following. It is written in the spirit of a constraint qualification.
\begin{lemma}\label{well}
Suppose the cost function $f$, and the functions $g_i$ determining the constraints are such that, in addition to being smooth:
\begin{enumerate}
\item there is $r>0$ with the property that for every $\bx$, if $I(\bx)$ is the set of indices $j$ where $g_j(\bx)\ge-r$, then the gradients $\{\nabla g_j(\bx): \bx\in I(\bx)\}$ are positively independent;
\item there is a positive constant $M$ such that
$$
\frac1M\le\frac{|\nabla g_i(\bx)|}{|\nabla g_j(\bx)|}\le M
$$
for every $i$ and $j$, and $\bx$, and 
$$
|\nabla f(\bx)|\le M|\nabla g_k(\bx)|
$$
for every $k$, and $\bx$. 
\end{enumerate}
Then the corresponding mathematical problem is well-balanced according to Definition \ref{wb}.
\end{lemma}
\begin{proof}
First-order optimality conditions for the (unconstrained) master problem read
$$
\nabla f(\bx)+\sum_{k=1}^my^{(k)}e^{y^{(k)}g_k(\bx)}\nabla g_k(\bx)=\bcero.
$$
They can be recast in the form
$$
\nabla f(\bx)+\sum_{k\notin I(\bx)}y^{(k)}e^{y^{(k)}g_k(\bx)}\nabla g_k(\bx)=-\sum_{k\in I(\bx)}y^{(k)}e^{y^{(k)}g_k(\bx)}\nabla g_k(\bx).
$$
Put $J(\bx)$ for the set of indices $k$ such that $g_k(\bx)\ge0$. Note that $J(\bx)\subset I(\bx)$. 
If for some $\by'$s arbitrarily large, $J(\bx)$ is non-empty, 
the right-hand side of the previous vector equality would tend to infinity because the positive independence assumed on the gradients prevents from having cancellations, and at least one component (if $J(\bx)$ is non-empty) would be arbitrarily large; but on the other, the left-hand side is definitely bounded regardless of the size of $\by$ because the coefficient in front of $\nabla g_k$ would be smaller than 
$y^{(k)}e^{-y^{(k)}r}$ with $r>0$.  This contradiction implies that the set of indices $J(\bx)$ has to be empty for $\by$ sufficiently large, and this means that the problem is well-balanced. 
\end{proof}

\section{A convergence theorem}\label{diuno}
We would like to provide a solid foundation for our approximation procedure for mathematical programs by proving convergence theorems as the following. We start with the one-constraint situation. Recall that the master function is
$$
L(\bx, y)=f(\bx)+e^{yg(\bx)}.
$$
Though the treatment of this one-dimensional situation is elementary, it will be the basic building block upon which  the general, multidimensional case will be shown. The hypothesis of the problem being well-balanced is changed by a much weaker one. 

\begin{theorem}\label{onedimension}
Suppose the cost function $f:\R^N\to\R$, and the constraint function $g:\R^N\to\R$ satisfy the following requirements:
\begin{enumerate}
\item they are smooth, and convex;
\item the corresponding $L(\bx, y)$ is coercive in $\bx$ for every fixed positive $y$;
\item there is some $\overline y\ge0$ such that $g(\bx(\overline y))\le0$.
\end{enumerate}
Then the above algorithm always converges to a (global) minimizer for (\ref{probbas}).
\end{theorem}
\begin{proof}
By a standard perturbation argument depending on a small parameter $\epsilon>0$, we can assume, without loss of generality, that the convexity condition imposed on $f$ is strict, and that the hessian $\nabla^2f(\bx)$ is a symmetric, positive definite matrix for all $\bx$. It suffices to add a term like $(\epsilon/2)|\bx|^2$ to $f(\bx)$. 
Under this strengthened hypothesis, the master function $L$, regarded as a function of $\bx$, is a coercive, strictly convex function, and so the unique minimizer $\bx\equiv\bx(y)$ is determined implicitly through the (unique) solution of the non-linear system
\begin{equation}\label{optimal}
\nabla f(\bx)+e^{yg(\bx)}y\nabla g(\bx)=\bcero,
\end{equation}
and so it is smooth by the Implicit Function Theorem. As already pointed out, the gradient of (\ref{optimal}) with respect to $\bx$ (the hessian of $L(\cdot, y)$) cannot be singular precisely because $L(\bx, y)$ is strictly convex with respect to $\bx$.
The conclusion of the statement of Theorem \ref{onedimension} will be a direct consequence of two lemmae whose proofs rely on suitable manipulations of (\ref{optimal}). All functions involved are smooth. 
\begin{lemma}\label{aux}
For every positive $y$, 
$$
[1+y g(\bx(y))]\frac d{dy}g(\bx(y))\le0.
$$
\end{lemma}
\begin{proof}
Since $\bx(y)$ is determined as the result of a (local) minimization process, we also have, in addition to (\ref{optimal}), 
$$
\nabla^2f(\bx)+e^{yg(\bx)}y^2\nabla g(\bx)\otimes\nabla g(\bx)+e^{yg(\bx)}y\nabla^2g(\bx)\ge0
$$
as symmetric matrices. This condition is nothing but the positivity of the hessian (with respect to $\bx$) as has already been indicated above. 
In particular, since all functions are smooth, $\bx'(y)$ is well-defined, and
\begin{equation}\label{segunda}
\bx'\cdot\nabla^2f(\bx)\bx'+e^{yg(\bx)}y^2|\nabla g(\bx)\bx'|^2+e^{yg(\bx)}y\bx'\cdot\nabla^2g(\bx)\bx'\ge0.
\end{equation}
On the other hand, from
$$
\nabla f(\bx(y))+e^{yg(\bx(y))}y\nabla g(\bx(y))=\bcero,
$$
differentiating with respect to $y$, we arrive at
$$
\nabla^2f(\bx)\bx'+e^{yg(\bx)}[g(\bx)+y\nabla g(\bx)\bx']y\nabla g(\bx)+
e^{yg(\bx)}[\nabla g(\bx)+y\nabla^2g(\bx)\bx']=\bcero.
$$
Multiplying by $\bx'$, and comparing to (\ref{segunda}), we see that
$$
e^{yg(\bx)}[yg(\bx)+1]\nabla g(\bx)\bx'\le0.
$$
This is exactly the statement in the lemma.
\end{proof}

Let us now focus on the function $G(y):\R^+\to\R^+$ given by 
$$
G(y)=e^{yg(\bx(y))}y
$$
where $\bx(y)$ is again determined by (\ref{optimal}), and $y>0$. 

\begin{lemma}\label{auxx}
The function $G$ is smooth, and for every positive $y$, 
$$
G'(y)\frac d{dy}g(\bx(y))\le0.
$$
\end{lemma}
\begin{proof}
 It is clear that (\ref{optimal}) can be written as 
$$
\nabla f(\bx(y))+G(y)\nabla g(\bx(y))=\bcero.
$$
By differentiating with respect to $y$, we will have
\begin{equation}\label{derivadaa}
\nabla^2f(\bx)\bx'(y)+G'(y)\nabla g(\bx)+G(y)\nabla^2 g(\bx)\bx'(y)=\bcero,
\end{equation}
and
$$
\bx'(y)\cdot\nabla^2f(\bx)\bx'(y)+G'(y)\bx'(y)\cdot\nabla g(\bx)+G(y)\bx'(y)\cdot\nabla^2 g(\bx)\bx'(y)=\bcero.
$$
Therefore
$$
G'(y)\frac d{dy}g(\bx(y))=-\bx'(y)\cdot[\nabla^2f(\bx)+G(y)\nabla^2 g(\bx)]\bx'(y)\le0
$$
due to the convexity assumed on $f$, and $g$.
\end{proof}
If we further put, for simplicity, $\overline g(y)=g(\bx(y))$, we have the three properties
\begin{equation}\label{desigualdades}
(1+y\overline g(y))\overline g'(y)\le0,\quad G'(y)\overline g'(y)\le0, \quad G'(y)(1+y\overline g(y))\ge0.
\end{equation}
Notice that the third one is a consequence of the other two, which are the conclusion of the two lemmae above. 
From these properties, we would like to highlight the following consequences:
\begin{enumerate}
\item If $\overline g(\overline y)\le0$ for some $\overline y\ge0$, then $\overline g(y)\le0$ for all $y\ge\overline y$. This is a direct consequence of the first inequality in (\ref{desigualdades}). Indeed, suppose there is  some $y>\overline y$ with $\overline g(y)>0$, and put
$$
y_0=\inf\{y>\overline y: \overline g(y)>0\}.
$$
It is then clear that $\overline g(y_0)=0$, and $\overline g'(y_0)\ge0$.
But then, again that first inequality in (\ref{desigualdades}) would imply $\overline g(y_0)\le-1/y_0$, a contradiction with the fact $\overline g(y_0)=0$. The argument is standard. 
As a consequence if $G(\overline y)\le\overline y$ for some $\overline y\ge0$, then $G(y)\le y$ for all $y\ge\overline y$. 
\item Over the set $\overline g\ge0$, i.e. over the set where $G$ is greater than or equal to the identity, $G$ is non-decreasing (third inequality in (\ref{desigualdades})), and $\overline g$ is non-increasing (first inequality in (\ref{desigualdades})).
\end{enumerate}

\begin{center}
\includegraphics[width=6.5cm]{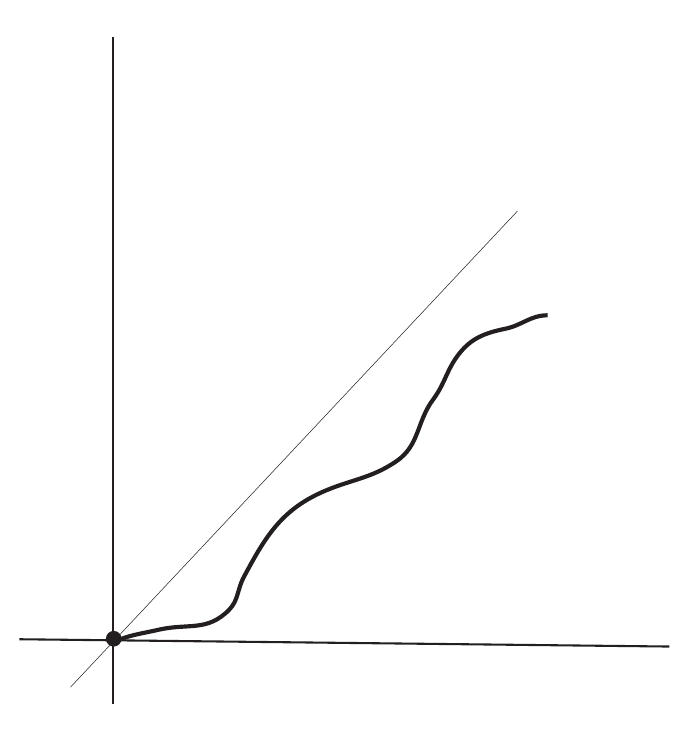}\quad
\includegraphics[width=6.5cm]{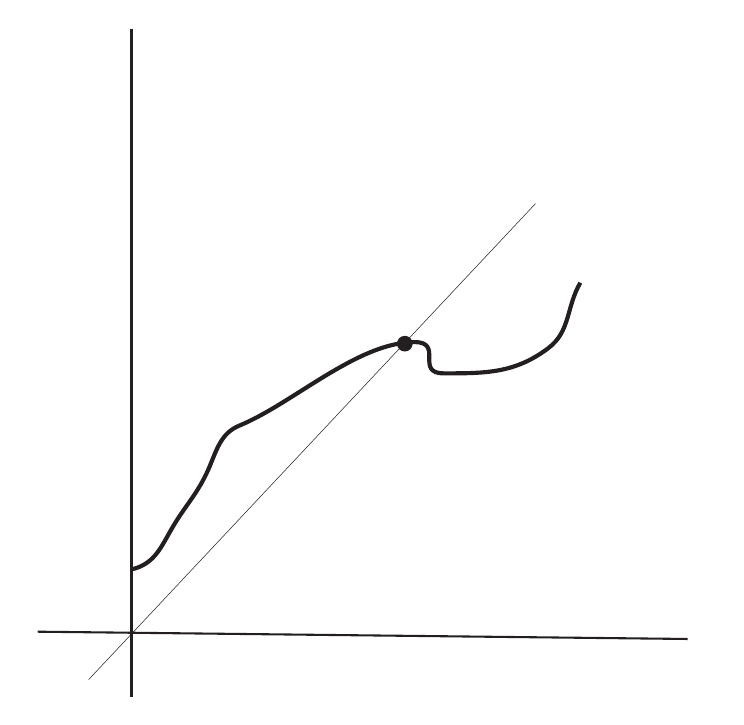}

Figure 1. The one-dimensional situation.  
\vspace*{0.5cm} 
\end{center}

We can now have two main scenarios. Because of our hypothesis on the existence of a certain positive $\overline y$ with $\overline g(\overline y)\le0$, we conclude as above that $G(y)\le y$ for all $y\ge\overline y$. 
\begin{enumerate}
\item It could happen that $G(y)\le y$, for all $y\ge0$ (Figure 1, left picture). In this case, $G$ has at least one fixed point at $y=0$, and the algorithm converges to it in a neighborhood of it. This corresponds to the situation where the global minimizer of $f$ complies with $g\le0$, and the restriction is inactive. 
\item Assume there is some $y_0>0$, so that $G(y_0)>y_0$ (Figure 1, right picture). By the properties just written, $G$ will have at least one fixed point in the interval $(y_0, \overline y]$, and the iteration process within this interval will always, regardless of the starting point in this same interval, converge to one such fixed point of $G$. In this case $\overline g$ will vanish at such fixed point, and the restriction is active. 
\end{enumerate}
\end{proof}

Even though the third hypothesis assumed in Theorem \ref{onedimension} is weaker than the well-balanced condition, in practice one would have to check this last requirement which in the one-restriction case simplifies to the two properties
\begin{gather}
|\nabla f|\le M|\nabla g|\hbox{ for some }M,\nonumber\\
\nabla g(\bx)=\bcero\hbox{ can only occur when }g(\bx)\le -r\nonumber
\end{gather}
for some fixed $r>0$. 

\section{The case of multiple constraints}\label{secmul}
Once the nature of the algorithm we would like to propose has been clarified with a single inequality constraint, we want to  examine the same strategy for several such conditions. It will suffice to focus on just two such constraints to be able to figure out the situation with many more such inequality constraints. 

Consider the optimization problem
\begin{equation}\label{mathprog}
\hbox{Minimize in }\bx\in \R^N:\quad f(\bx)\quad\hbox{ subject to }\quad g_1(\bx)\le0, g_2(\bx)\le0.
\end{equation}
After the situation examined above, we focus on the unconstrained problem
$$
\hbox{Minimize in }\bx\in\R^N:\quad f(\bx)+e^{y_1g_1(\bx)}+e^{y_2g_2(\bx)},
$$
and regard the (local) solution $\bx\equiv\bx(y_1, y_2)$ for $y_i>0$ as an implicit mapping depending on $\by=(y_1, y_2)$. We assume that $f$ as well as $g_i$ are such that there is no difficulty in finding $\bx(y_1, y_2)$, and that this dependence is continuous, even smooth. After Lemma \ref{aux}, we would like to use the fundamental information
$$
\nabla f(\bx)+e^{y_1g_1(\bx)}y_1\nabla g_1(\bx)+e^{y_2g_2(\bx)}y_2\nabla g_2(\bx)=\bcero,
$$
together with the local convexity condition
\begin{align}
\nabla^2 f(\bx)&+e^{y_1g_1(\bx)}[y_1^2\nabla g_1(\bx)\otimes\nabla g_1(\bx)+y_1\nabla^2g_1(\bx)]\nonumber\\
&+e^{y_2g_2(\bx)}[y_2^2\nabla g_2(\bx)\otimes\nabla g_2(\bx)+y_2\nabla^2g_2(\bx)]\ge\bcero,\nonumber
\end{align}
in the sense of symmetric matrices. One would have then to fix a unit direction $\bn=(n_1, n_2)$, and try to find relevant information much in the same way as we have done with the single inequality situation. We believe, however, that it is a much more transparent strategy to freeze alternatively each one of the two multiplier $y_i$, $i=1, 2$, and apply the single inequality constraint with respect to the complementary constraint. 

Namely, let $y_1\ge0$ be fixed, and consider the mathematical program
$$
\hbox{Minimize in }\bx\in\R^N:\quad f(\bx)+e^{y_1g_1(\bx)}\quad\hbox{ subject to }\quad g_2(\bx)\le0.
$$
Assume that the hypotheses of Theorem \ref{onedimension} permit us to conclude that there is $\bx^{(1)}\equiv\bx(y_1)$, and $y_2^{(1)}\equiv y_2(y_1)$ such that
\begin{gather}
\nabla f(\bx^{(1)})+e^{y_1g_1(\bx^{(1)})}y_1\nabla g_1(\bx^{(1)})+y_2^{(1)}\nabla g_2(\bx^{(1)})=\bcero,\nonumber\\
y_2^{(1)}\ge0,\quad g_2(\bx^{(1)})\le0,\quad y_2^{(1)}g_2(\bx^{(1)})=0.\nonumber
\end{gather}
Likewise, we would also have $\bx^{(2)}\equiv\bx(y_2)$, and $y_1^{(2)}\equiv y_1(y_2)$ such that
\begin{gather}
\nabla f(\bx^{(2)})+y_1^{(2)}\nabla g_1(\bx^{(2)})+e^{y_2g_2(\bx^{(2)})}y_2\nabla g_2(\bx^{(2)})=\bcero,\nonumber\\
y_1^{(2)}\ge0,\quad g_1(\bx^{(2)})\le0,\quad y_1^{(2)}g_1(\bx^{(2)})=0.\nonumber
\end{gather}
This solution would correspond to the problem
$$
\hbox{Minimize in }\bx\in\R^N:\quad f(\bx)+e^{y_2g_2(\bx)}\quad\hbox{ subject to }\quad g_1(\bx)\le0,
$$
again through Theorem \ref{onedimension}, assuming that the appropriate hypotheses hold. The whole argument then revolves around ensuring that the graphs of the two functions $y_2(y_1)$, and $y_1(y_2)$ meet at some pair $\overline\by=(\overline y_1, \overline y_2)$, for in this case we would have a vector 
$$
\overline\bx=\bx(\overline y_1, \overline y_2)=\bx^{(1)}(\overline y_1)=\bx^{(2)}(\overline y_2)
$$ 
such that
\begin{gather}
\nabla f(\overline\bx)+\overline y_1\nabla g_1(\overline\bx)+\overline y_2\nabla g_2(\overline\bx)=\bcero,\label{optimalidaddos}\\
\overline\by\ge\bcero,\quad \bbg(\overline\bx)\le\bcero,\quad\overline\by\cdot\bbg(\overline\bx)=0,\quad 
\overline\by=(\overline y_1, \overline y_2),\nonumber
\end{gather}
the optimality conditions for a solution of the mathematical program (\ref{mathprog}). 
The iterative procedure can be set up in a similar way.
\begin{enumerate}
\item Initialization. Take  $\by_0>\bcero$, and $\bx_0\in\R^N$ in an arbitrary way. For instance, $\by_0=\buno$, $\bx_0=\bcero$. 
\item Iterative step until convergence. Suppose we have $\by_j$, $\bx_j$. 
\begin{enumerate}
\item Solve for the unconstrained optimization problem 
$$
\hbox{Minimize in }\bz\in\R^N: \quad f(\bz)+\sum_{k=1}^2e^{y_j^{(k)} g_k(\bz)}
$$
starting from the intial guess $\bx_j$. Let $\bx_{j+1}$ be such (local) minimizer.
\item If $\by_j\cdot \bbg(\bx_{j+1})$
vanishes, stop and take $\by_j$ as the multiplier of the problem, and $\bx_{j+1}$ as the solution of the constrained problem.
\item If $\by_j\cdot \bbg(\bx_{j+1})$ does not vanish, update 
$$
y_{j+1}^{(k)}=e^{y_j^{(k)} g_k(\bx_{j+1})}y_j^{(k)}
$$
for $k=1, 2$. 
\end{enumerate}
\end{enumerate}
Keep in mind the two functions $y_1(y_2)$, and $y_2(y_1)$, as defined in the discussion above: $y_1(y_2)$ corresponds to the multiplier for the one-dimensional situation (Section \ref{diuno}) with objective functional $f(\bx)+e^{y_2g_2(\bx)}$, and constraint $g_1(\bx)\le0$. Similarly for the other one. 

\begin{theorem}
Suppose the cost function $f:\R^N\to\R$, and the two components of the constraint map $\bbg:\R^N\to\R^2$ comply with:
\begin{enumerate}
\item they all are smooth, and convex;
\item the corresponding $L(\bx, \by)$ is coercive in $\bx$ for every fixed $\by$ with (strictly) positive components;
\item the mathematical problem \eqref{mathprog} is well-balanced.
\end{enumerate}
Then the above algorithm always converges to a solution of (\ref{optimalidaddos}) which is a (global) minimizer for (\ref{mathprog}).
\end{theorem}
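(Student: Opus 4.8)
The plan is to reduce the two‑constraint problem, via freezing one multiplier at a time, to two copies of the one‑constraint Theorem \ref{onediversion}, and then to make the two resulting multiplier curves meet. First I would repeat the perturbation trick from the proof of Theorem \ref{onediversion}: replace $f$ by $f+(\epsilon/2)|\bx|^2$ so that $L(\bx,\by)=f(\bx)+e^{y_1g_1(\bx)}+e^{y_2g_2(\bx)}$ is strictly convex and coercive in $\bx$ for every $\by\in\R^2_+$, whence $\by\mapsto\bx(\by)$ is single‑valued and smooth by the Implicit Function Theorem; the perturbation is removed at the end by the standard limiting argument for convex programs. For fixed $y_1\ge0$ the map $\bx\mapsto f(\bx)+e^{y_1g_1(\bx)}$ is smooth and convex (the convex increasing $t\mapsto e^{y_1t}$ composed with the convex $g_1$, plus $f$); adding $e^{y_2g_2(\bx)}$ gives back $L$, coercive by hypothesis~(2), and $g_2$ is regular by well‑balancedness. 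So Theorem \ref{onediversion} applies to ``minimize $f(\bx)+e^{y_1g_1(\bx)}$ subject to $g_2(\bx)\le0$'': its inner iteration on the second multiplier converges, producing a well‑defined $y_2=y_2(y_1)$ and a point $\bx^{(1)}(y_1)$ obeying the one‑constraint optimality relations with $g_2$; symmetrically one gets $y_1(y_2)$ and $\bx^{(2)}(y_2)$. Well‑balancedness (the ``cube'' proposition in the preliminaries) also supplies $M>0$ with $y_2(y_1)\le M$ for $y_1\in[0,M]$ and $y_1(y_2)\le M$ for $y_2\in[0,M]$.

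\emph{The two curves meet.} The crucial observation is that the fixed point of the inner one‑dimensional iteration is $\bx^{(1)}(y_1)=\bx(y_1,y_2(y_1))$, i.e. $\bx^{(1)}(y_1)$ is exactly the unconstrained minimizer of the \emph{full} master function at $(y_1,y_2(y_1))$ (its inner step minimizes $L(\bz,(y_1,(y_2)_j))$ and the update is the rule \eqref{actualizar} for the second multiplier); likewise $\bx^{(2)}(y_2)=\bx(y_1(y_2),y_2)$. Since $y_1(\cdot)$ and $y_2(\cdot)$ are continuous (the one‑dimensional fixed point depends continuously on the parameters of its objective), the composition $y_1\circ y_2$ maps $[0,M]$ continuously into itself and therefore has a fixed point $\overline y_1$ by the intermediate value theorem (for more constraints this is replaced by Brouwer's fixed point theorem on a cube). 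Setting $\overline y_2=y_2(\overline y_1)$ one has $\overline y_1=y_1(\overline y_2)$ and $\overline y_2=y_2(\overline y_1)$, so $\overline\bx:=\bx^{(1)}(\overline y_1)=\bx(\overline\by)=\bx^{(2)}(\overline y_2)$ is one and the same point. Reading the one‑constraint relations off both subproblems gives $g_1(\overline\bx)\le0$, $g_2(\overline\bx)\le0$, $\overline y_1g_1(\overline\bx)=0$, $\overline y_2g_2(\overline\bx)=0$, and the gradient equation of the first subproblem, $\nabla f(\overline\bx)+e^{\overline y_1g_1(\overline\bx)}\overline y_1\nabla g_1(\overline\bx)+\overline y_2\nabla g_2(\overline\bx)=\bcero$, collapses (using $\overline y_1g_1(\overline\bx)=0$) to exactly \eqref{optimalidaddos}. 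By convexity of $f,g_1,g_2$ these KKT conditions are sufficient for global optimality, and $\epsilon\to0$ transfers the conclusion to the original \eqref{mathprog}.

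\emph{Convergence of the iterates.} It remains to show that the joint algorithm, which updates both multipliers simultaneously, actually converges rather than merely that $\bG$ has a fixed point. Here I would run the monotonicity machinery of Theorem \ref{onediversion} componentwise: with the complementary multiplier frozen, each inner minimization is precisely the one‑dimensional situation of Section \ref{diuno} for the constraint $g_k$, so the three inequalities of \eqref{desigualdades} hold for $\overline g_k(y^{(k)})=g_k(\bx(\by))$; combined with the boundedness of $\by_j$ in $[0,M]^2$ and the feasibility that well‑balancedness forces once a component is large, this should trap the sequence and push it monotonically onto a fixed point of $\bG$, which by the previous step (or Proposition \ref{primera}) is the desired $\overline\by$.

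\emph{Main obstacle.} The delicate point is this last step, i.e. the passage from the two \emph{uncoupled} one‑dimensional pictures to the \emph{coupled} two‑dimensional dynamics: the clean single‑variable monotonicity behind convergence does not transfer verbatim once $y_1$ and $y_2$ move together, since increasing $y_1$ can displace $g_2(\bx(\by))$ in an a priori uncontrolled way. Showing that these cross‑effects do not destroy the monotone trapping — that the graphs of $y_2(y_1)$ and $y_1(y_2)$ are not only met but genuinely approached by the iterates — is where the well‑balancedness and global convexity hypotheses have to be spent, and it is the part of the argument that does not reduce to bookkeeping.
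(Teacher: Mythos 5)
Your proposal follows essentially the same route as the paper: the perturbation to strict convexity, the reduction to two copies of Theorem \ref{onediversion} by freezing one multiplier at a time, the intersection of the graphs of $y_1(y_2)$ and $y_2(y_1)$, and the identification of the intersection point with a KKT point of \eqref{mathprog}, hence a global minimizer by convexity. The only cosmetic difference in the first half is that you phrase the existence of the intersection as a fixed point of $y_1\circ y_2$ on $[0,M]$ via the intermediate value theorem, whereas the paper gets it from the comparisons $\tilde y_1>y_1(\tilde y_2)$ and $\tilde y_2>y_2(\tilde y_1)$ (using well-balancedness to place $\tilde\by$ above both curves) together with the finiteness of $y_1(0)$, $y_2(0)$; these are the same argument in different clothing.

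On the step you single out as the main obstacle, you have not missed an idea that the paper supplies. The paper's convergence argument is exactly the sketch you give and no more: it enlarges $\tilde\by$ so that $\tilde y_1>y_1(y_2)$ for all $y_2\in[0,\tilde y_2]$ and $\tilde y_2>y_2(y_1)$ for all $y_1\in[0,\tilde y_1]$, observes that each componentwise update $\overline y_k\mapsto \overline y_k e^{\overline y_k g_k(\overline\bx)}$ ``tends to the graph'' of the corresponding curve with the other multiplier frozen at its current iterate, concludes that the iterates stay in the box $[0,\tilde y_1]\times[0,\tilde y_2]$, and then asserts that by this uniform boundedness and continuity the differences $\overline y_k-z_k$ converge to zero. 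The cross-effect you worry about --- that updating $y_1$ displaces $g_2(\bx(\by))$ in a way not controlled by the one-dimensional monotonicity --- is not explicitly addressed; the paper's proof leans entirely on the componentwise one-dimensional picture, which is what you proposed. So your write-up is faithful to the paper's proof, including in the place where that proof is thin.
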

\begin{proof}
Again, we may assume through a standard perturbation argument, and without loss of generality, that $f$ is strictly convex. In this way, the master function $L(\bx, \by)$ is coercive and strictly convex in $\bx$, for every $\by>\bcero$, and the mapping taking each $\by$ into the unique (global) minimizer $\bx\equiv\bx(\by)$ of $L(\bx, \by)$ with respect to $\bx$ is well-defined, and smooth. 

Let us consider the smooth mapping $\bG:\R^2_+\mapsto\R^2_+$ carrying $\by$ into 
$$
\left(y^{(k)}e^{y^{(k)}g_k(\bx(\by))}\right)_{k=1, 2} .
$$
Our statement is concerned with the fixed points of $\bG$. 

Because the problem is well-balanced, take $\tilde\by=(\tilde y_1, \tilde y_2)$ with components large enough, but definitely with $\tilde y_1>y_2(0)\ge0$, $\tilde y_2>y_1(0)\ge0$, where the functions $y_1$ and $y_2$ have been introduced above, and such that $\overline g(\tilde\by)<0$. Recall that $y_i(0)$ is the multiplier associated with the program
$$
\hbox{Minimize in }\bx\in\R^N:\quad f(\bx)\quad\hbox{ subject to }\quad g_i(\bx)\le0,
$$
for $i=1, 2$. 

We then claim that $\tilde y_1>y_1(\tilde y_2)$, and, similarly,  $\tilde y_2>y_2(\tilde y_1)$. Indeed, either $y_1(\tilde y_2)=0$, and our first inequality is correct; or else, by definition of $y_1(\tilde y_2)$, we should have that $\overline g_1(y_1(\tilde y_2), \tilde y_2)=0$, while $\overline g_1(\tilde y_1, \tilde y_2)<0$ by hypothesis. By the discussion with the single-inequality constraint case in the proof of Theorem \ref{onedimension} (right after the proof of Lemma \ref{auxx}, item (1)), this implies the claim $\tilde y_1>y_1(\tilde y_2)$. 
Likewise, for the other case.  

This conclusion, together with the previous choice of $\tilde\by$, immediately implies that the graphs of the two functions  intersect (at least) in a point $(\overline y_1, \overline y_2)\in\overline{\R^2_+}$ (the closure of $\R^2_+$). As indicated earlier in the discussion before the statement of the theorem, this intersection point generates a solution of (\ref{optimalidaddos}). We can even take $\tilde\by$, with larger components if necessary, so that $\tilde y_1>y_1(y_2)$ for all $y_2$ in the interval $[0, \tilde y_2]$, and $\tilde y_2>y_2(y_1)$ for all $y_1\in[0, \tilde y_1]$. 

\begin{center}
\includegraphics[width=8cm]{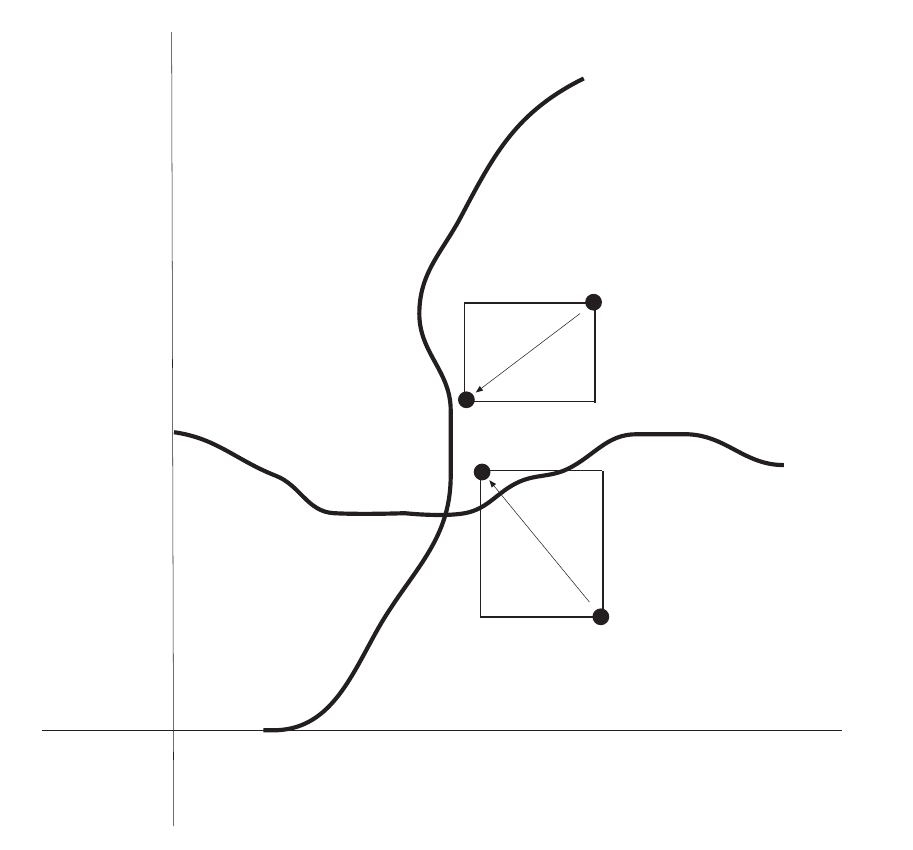}

Figure 2. The iteration rule for the two-dimensional situation.  
\vspace*{0.5cm} 
\end{center}

Let us now deal with the convergence issue. It is based on the global convergence for the one dimensional situation, and from this point of view is not difficult to show. Refer to Figure 2.

Because of  our final choice of the vector $\tilde\by$, it is clear that the whole sequence of iterates is uniformly bounded because each update rule 
\begin{equation}\label{actualizacion}
\overline y_k\mapsto z_k=\overline y_k e^{\overline y_kg_k(\overline\bx)},\quad k=1, 2,
\end{equation}
tends to the graph of the corresponding function $y_2(y_1)$, or $y_1(y_2)$, for both components, respectively, according to the discussion prior to the statement of the theorem. Indeed if we have a certain iterate $(\overline\by, \overline\bx)$ where $\overline\bx=\bx(\overline\by)$, 
again by the remarks made before the statement of the theorem, the difference $\overline y_1-z_1$ in (\ref{actualizacion}) corresponds to the one-dimensional process when the second variable $y_2$ is frozen at the value $\overline y_2$. Likewise for the difference $\overline y_2-z_2$ when $y_1=\overline y_1$. Because iterates stay in the fixed box $[0, \tilde y_1]\times[0, \tilde y_2]$, we conclude the claimed convergence through the one-dimensional situation. 
\end{proof}

We finally come to the general situation in which we become interested in the general mathematical program
\begin{equation}\label{probgen}
\hbox{Minimize in }\bx\in\R^N: \quad f(\bx)\quad\hbox{ subject to }\quad\bbg(\bx)\le\bcero,
\end{equation}
where $\bbg=(g_k):\R^N\to\R^m$. The fundamental map around which revolves our algorithm takes a vector of multipliers $\by=(y^{(k)})\in\R^m$, $\by>\bcero$, into a (local) solution $\bx(\by)$ of our basic unconstrained problem
\begin{equation}\label{unconsprob}
\hbox{Minimize in }\bx\in\R^N:\quad f(\bx)+\sum_{k=1}^m e^{y^{(k)}g_k(\bx)}.
\end{equation}
The objective function
$$
L(\bx, \by)=f(\bx)+\sum_{k=1}^m e^{y^{(k)}g_k(\bx)}
$$
of this unconstrained problem
is the master function of the problem. 

The algorithm has already been described:
\begin{enumerate}
\item Initialization. Take  $\by_0\in\R^m$, $\by_0>\bcero$, and $\bx_0\in\R^N$ in an arbitrary way. For instance, $\by_0=\buno$, $\bx_0=\bcero$. 
\item Iterative step until convergence. Suppose we have computed $\by_j$, and $\bx_j$. 
\begin{enumerate}
\item Solve  the unconstrained optimization problem 
$$
\hbox{Minimize in }\bz\in\R^N: \quad L(\bz, \by_j)=f(\bz)+\sum_{k=1}^me^{\by_j^{(k)} g_k(\bz)}
$$
starting from the intial guess $\bx_j$. Let $\bx_{j+1}$ be such (local) minimizer.
\item If $\by_j\cdot \bbg(\bx_{j+1})$
vanishes, stop,  take $\by_j$ as the multiplier of the problem, and $\bx_{j+1}$ as the solution of the constrained problem.
\item If $\by_j\cdot \bbg(\bx_{j+1})$ does not vanish, update 
$$
y_{j+1}^{(k)}=e^{y_j^{(k)} g_k(\bx_{j+1})}y_j^{(k)}
$$
for all $k=1, 2, \dots, m$. 
\end{enumerate}
\end{enumerate}

The proof of Theorem \ref{principalg} follows exactly the same strategy as with the two-component case.



\begin{thebibliography}{99}
\bibitem{bhatti} M. A. Bhatti, \textit{Practical Optimization Methods with Mathematica Applications}, Springer-Verlag, New York, 2000.

\bibitem{bertsekas} D. P. Bertsekas, \textit{Constrained Optimization and Lagrange Multiplier Methods}, Athena Sci., Bermont, Massachusetts, 1996. 

\bibitem{fiaccomccormick} Fiacco, A. V., McCormick, G. P., \textit{Nonlinear Programming. Sequential Unconstrained Minimization Techniques}, SIAM Classics in Appl. Math., 4, Philadelphia.

\bibitem{nocedal} Nocedal, J.,  Wright, S. J. \textit{Numerical optimization}, Springer Series in Operations Research, Springer-Verlag, New York, 1999. 

\bibitem{hiriarturruty}  Hiriart-Urruty, J. B., LemarŽchal, C., \textit{Convex analysis and minimization algorithms. I. Fundamentals},  Grundlehren der Mathematischen Wissenschaften [Fundamental Principles of Mathematical Sciences], 305. Springer-Verlag, Berlin, 1993.

\bibitem{hockschit} Hock, W., Schittkowski, K., \textit{Test examples for nonlinear programming codes}, Lect. Notes Econ. Math. Syst., 187, Springer, 1981.
\end{thebibliography}
\end{document}